%\pdfoutput=1
\documentclass[12pt,a4paper,reqno]{amsart}

\usepackage{amssymb}
\usepackage{amscd}
\usepackage[pdftex,pdfpagelabels]{hyperref}
\usepackage{enumerate}
\usepackage{comment}
\usepackage{graphicx}
\usepackage{siunitx}
\usepackage{tikz-cd}
\usepackage{stix}
\usepackage{bm}
\DeclareMathAlphabet\mathbfcal{LS2}{stixcal}{b}{n}
\numberwithin{equation}{section}

\usepackage{mathtools}%                  http://www.ctan.org/pkg/mathtools
\usepackage[tableposition=top]{caption}% http://www.ctan.org/pkg/caption
\usepackage{booktabs,dcolumn}%           http://www.ctan.org/pkg/dcolumn + http://www.ctan.org/pkg/booktabs

% Lighter notation.
%\newcommand*\mc[1]{\multicolumn{1}{c}{#1}}
%\newcommand*\tupref[2]{\href{http://math.mit.edu/~primegaps/tuples/admissible_#1_#2.txt}{\num{#2}}}

%\DeclareMathOperator*\Kl{Kl} (commented because yield bad display for  \Kl_q %replaced with \newcommand... )
%\DeclareMathOperator*\FT{FT} (commented because yield bad display for \FT_q
%replaced with \newcommand...)

% Setup for ``caption''.
%\DeclareCaptionLabelSeparator{separation}{:\quad}
%\captionsetup{
  %font=small,
  %labelfont=sc,
  %labelsep=separation,
  %width=0.8\textwidth
%}

\DeclareFontFamily{OT1}{rsfs}{}
\DeclareFontShape{OT1}{rsfs}{n}{it}{<-> rsfs10}{}
\DeclareMathAlphabet{\mathscr}{OT1}{rsfs}{n}{it}

\addtolength{\textwidth}{3 truecm}
\addtolength{\textheight}{1 truecm}
\setlength{\voffset}{-.6 truecm}
\setlength{\hoffset}{-1.3 truecm}

\theoremstyle{plain}

\newtheorem{theorem}{Theorem}[section]

\newtheorem{lemma}[theorem]{Lemma}

\newtheorem{question}[theorem]{Question}

\theoremstyle{definition}

\newtheorem{remark}[theorem]{Remark}

\newcommand\R{\mathbb{R}}

\newcommand\F{\mathbf{F}}

\newcommand\eps{\varepsilon}

\renewcommand{\mod}{\bmod}

\parindent 0mm
\parskip   5mm

\begin{document}

\title[Point sets with few distinct distances]{Planar point sets with forbidden $4$-point patterns and few distinct distances}

\author{Terence Tao}
\address{UCLA Department of Mathematics, Los Angeles, CA 90095-1555.}
\email{tao@math.ucla.edu}

%\email{}

\subjclass[2020]{52C10, 05B25}

\begin{abstract}  We show that for any large $n$, there exists a set of $n$ points in the plane with $O(n^2/\sqrt{\log n})$ distinct distances, such that any four points in the set determine at least five distinct distances.  This answers (in the negative) a question of Erd\H{o}s.  The proof combines an analysis by Dumitrescu of forbidden four-point patterns with an algebraic construction of Thiele and Dumitrescu (to eliminate parallelograms), as well as a randomized transformation of that construction (to eliminate most other forbidden patterns).
\end{abstract}  

\maketitle

%%%%%%%%%%%%%%%%%%%%%%%%%%%%%%%%%%%%%%%%%%%%%%%%%

\section{Introduction}

In this note we answer (in the negative) the following question of Erd\H{o}s \cite[p. 101]{erdos-75}, \cite[p. 61]{erdos-83}, \cite[p. 149]{erdos-86}, \cite[p. 34]{erdos-88}, \cite[p. 347]{erdos-95}, \cite[p. 231]{erdos-97}, which is also listed at \cite[Conjecture 6, p. 204]{bmp} and \url{https://www.erdosproblems.com/135}:

\begin{question}[Erd\H{o}s \#135]\label{quest} Let $A \subset \R^2$ be a set of $n$ points such that any four points in the set determine at least five distinct distances. Must $A$ determine $\gg n^2$ many distances?
\end{question}

In fact we show that $A$ can have as few as $O(n^2/\sqrt{\log n})$ distinct distances.  Here we use the usual asymptotic notation $X = O(Y)$, $X \ll Y$, or $Y \gg X$ to denote the bound $|X| \leq CY$ for an absolute constant $C$, and $X \asymp Y$ as shorthand for $X \ll Y \ll X$.  This negative answer to Question \ref{quest} also refutes a stronger claim (see \cite[p. 231]{erdos-97}) that $A$ must contain a subset of cardinality $\gg n$ in which all pairwise distances are distinct.

Our arguments combine probabilistic arguments of Dumitrescu \cite{dumitrescu} with a deterministic algebraic construction of Thiele \cite{thiele} and Dumitrescu \cite{dumitrescu-2008}, each of which had achieved partial progress towards Question \ref{quest}.  The idea of combining these two 
constructions was already anticipated in \cite[Problem 2]{dumitrescu}, but the resulting number-theoretic claims that needed to be verified seem challenging to prove.  Our sole innovation is to randomize the algebraic construction in \cite{thiele}, \cite{dumitrescu-2008} to allow this strategy to become viable without requiring any advanced number theory.

We now establish the negative answer to Question \ref{quest}. In \cite[Lemma 1]{dumitrescu} (see also \cite[Figure 1]{dumitrescu}) it was observed that the four-point configurations $\{P_1,P_2,P_3,P_4\}$ in the plane which failed to determine five or more distinct distances belonged to one of eight patterns:
\begin{itemize}
\item[$\pi_1$:] An equilateral triangle plus an arbitrary vertex.
\item[$\pi_2$:] A parallelogram.
\item[$\pi_3$:] An isosceles trapezoid (four points on a line, $P_1,P_2,P_3,P_4$, where $\overleftrightarrow{P_1P_2} = \overleftrightarrow{P_3P_4}$, form a degenerate isosceles trapezoid).
\item[$\pi_4$:] A star with three edges of the same length.
\item[$\pi_5$:] A path with three edges of the same length.
\item[$\pi_6$:] A kite.
\item[$\pi_7$:] An isosceles triangle plus an edge incident to a base endpoint, and whose length equals the length of the base.
\item[$\pi_8$:] An isosceles triangle plus an edge incident to the apex, and whose length equals the length of the base.
\end{itemize}
We draw the reader's attention to the parallelogram pattern $\pi_2$, as these are far more prevalent than the other seven patterns in many natural constructions, and will thus require particular care to avoid. We permit some overlap between patterns; for instance, a rhombus is simultaneously of pattern $\pi_2$, $\pi_5$, and $\pi_6$.

It is a well-known observation of Erd\H{o}s \cite{erdos-46} that the square grid $ \{0,\dots,n-1\}^2$ determines $\asymp n^2/\sqrt{\log n}$ distinct distances when $n$ is large.  Thus, after adjusting $n$ by a constant if necessary, it will suffice to establish

\begin{theorem}[Main theorem]\label{main-thm} Let $n$ be sufficiently large.  Then there exists a subset of $ \{0,\dots,n-1\}^2$ of cardinality $\gg n$ that avoids all of the eight patterns $\pi_1,\dots,\pi_8$.
\end{theorem}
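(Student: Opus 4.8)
The plan is to build $A$ as a large subset of a randomized version of the modular parabola construction of Thiele and Dumitrescu, from which parallelograms and isosceles trapezoids are excluded at the outset, with the remaining offending $4$-point configurations then removed by a first-moment deletion argument. One of the eight patterns costs nothing: the square lattice $\Z^2$ contains no equilateral triangle, since such a triangle of side $s$ has area $\sqrt3\,s^2/4$, irrational whenever $s^2$ is a positive integer, whereas every lattice triangle has area in $\tfrac12\Z$; hence $\pi_1$ is avoided by every subset of $\{0,\dots,n-1\}^2$, and it suffices to produce a subset of $\{0,\dots,p-1\}^2$ of size $\gg n$ avoiding $\pi_2,\dots,\pi_8$, where $p$ is the largest prime with $p\le n$ (so $p\gg n$ by Bertrand's postulate). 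For $\theta=(a,b,c,d)\in(\Z/p\Z)^4$ with $ad-bc\not\equiv 0\pmod p$, set $S_\theta=\{(ai+bi^2\bmod p,\ ci+di^2\bmod p):0\le i<p\}$, the image of the modular parabola $\{(i,i^2\bmod p)\}$ under an invertible linear map of $(\Z/p\Z)^2$ (one may enlarge this parameter family if necessary). Then $|S_\theta|=p$, and $S_\theta$ is a Sidon subset of $(\Z/p\Z)^2$ with no three collinear points: the parabola is Sidon because the residues of $i+j$ and $i^2+j^2$ determine $\{i,j\}$, and is free of three collinear points by a Vandermonde determinant, and both properties are inherited by invertible linear images. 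A nondegenerate parallelogram $P_1P_2P_3P_4$ satisfies $P_1+P_3=P_2+P_4$ in $\Z^2$ and hence modulo $p$, which the Sidon property forbids, while four collinear points of $\Z^2$ remain collinear modulo $p$, which is likewise impossible in $S_\theta$; so every $S_\theta$, and every subset of one, avoids $\pi_2$ and $\pi_3$, and the randomization is needed only for $\pi_4,\dots,\pi_8$.

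The point is that each of $\pi_4,\dots,\pi_8$ demands, besides one isosceles triangle, a fourth point pinned down by a \emph{further} equality of distances, so these are ``doubly coincidental'' configurations; in a set of $\asymp p$ points in a $p\times p$ box whose pairwise squared distances behave like generic sums of two squares one expects only $p^{o(1)}$ of them. (Isosceles triangles by themselves are abundant --- a birthday heuristic suggests $\asymp p\sqrt{\log p}$ of them --- which is why the objective is to avoid $\pi_4,\dots,\pi_8$ rather than all isosceles triangles.) The bare modular parabola might fail this genericity, since $i^2\bmod p$ carries multiplicative structure, but a random twist should not. Concretely, for $j\in\{4,\dots,8\}$ I would write the number $N_j(\theta)$ of copies of $\pi_j$ in $S_\theta$ as $\sum_\tau\mathbf 1[\tau\text{ realizes }\pi_j]$, summed over index-tuples $\tau$, so that $\E_\theta[N_j(\theta)]=\sum_\tau\Pr_\theta[\tau\text{ realizes }\pi_j]$, and try to show this is $o(p)$. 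For fixed $\tau$, the condition ``$\tau$ realizes $\pi_j$'' amounts to the simultaneous vanishing of at least two polynomials in $\theta$ --- the relevant equalities of squared distances --- whose coefficients depend on $\tau$ and on a bounded number of ``branch'' choices arising when residues modulo $p$ are lifted to integers in $[0,p)$. Generically this cuts the parameter space by at least two dimensions; combined with the rigidity of the patterns (two of the four points together with $\theta$ determine the remaining two up to $O(1)$ possibilities), this ought to yield $\sum_\tau\Pr_\theta[\,\cdot\,]=o(p)$.

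The step I expect to be the main obstacle is ruling out the degenerate case in which one of these polynomials vanishes \emph{identically} in $\theta$, so that the metric coincidence holds for all twists at once. Such vanishing would mean that the corresponding equal-distance variety is respected by the $\mathrm{GL}_2(\Z/p\Z)$-action, and since Euclidean distance has nothing to do with linear maps over $\Z/p\Z$ I expect this to occur only for degenerate $\tau$ (with repeated or collinear points), which have already been removed --- but making this dichotomy precise, and then cutting the count of admissible $\tau$ genuinely below $p$ instead of the crude $p^{O(1)}$, is exactly the place where, without randomization, one would instead be forced to count solutions of systems such as $x_1^2+y_1^2=x_2^2+y_2^2$ with the $y_i$ constrained to lie on the modular parabola --- a hard number-theoretic problem, which is precisely what the randomization is designed to circumvent.

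Granting $\E_\theta\big[\sum_{j=4}^{8}N_j(\theta)\big]=o(p)$, fix a $\theta$ attaining at most this expectation, delete one point from each of the $o(p)$ quadruples of $S_\theta$ that realize some $\pi_j$ with $4\le j\le 8$, and let $A$ be what remains. Deletion creates no new patterns, so $A\subseteq\{0,\dots,p-1\}^2\subseteq\{0,\dots,n-1\}^2$ has $|A|=p-o(p)\gg n$ and avoids $\pi_4,\dots,\pi_8$ by construction, $\pi_1$ because $\Z^2$ has no equilateral triangle, and $\pi_2,\pi_3$ because $A\subseteq S_\theta$. This proves the theorem.
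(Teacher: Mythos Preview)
Your overall plan---randomize the Thiele--Dumitrescu parabola to keep the parallelogram-freeness while gaining enough entropy for a first-moment deletion of the remaining patterns---is exactly the paper's strategy. But two real gaps remain, and the paper closes them with an idea you are missing.

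First, your treatment of $\pi_3$ is incorrect: $\pi_3$ is the class of isosceles trapezoids, and the parenthetical about four collinear points merely records the degenerate case. A generic isosceles trapezoid has no three collinear points, so the ``no three collinear in $S_\theta$'' argument does not exclude it. This is not fatal---$\pi_3$ can simply be lumped with $\pi_4,\dots,\pi_8$ in the deletion step (indeed Dumitrescu's $O(n^5)$ count already covers $\pi_3$)---but as written the claim is wrong.

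Second, and more seriously, your counting argument for $\pi_4,\dots,\pi_8$ is only a heuristic, and you say so yourself (``ought to yield'', ``the step I expect to be the main obstacle''). You parametrize $S_\theta$ by indices $i$ and try to bound, for each index $4$-tuple, the probability over $\theta$ that the lifted points in $[0,p)^2$ satisfy the relevant Euclidean distance equalities. This forces you to analyze polynomial systems in $(a,b,c,d)$ whose coefficients depend on the residue-lifting branches, and to rule out identical vanishing case by case---precisely the difficulty you flag. With only four parameters the degree-of-freedom count is also tight: even granting two independent constraints you get probability $\asymp 1/p^2$ per $4$-tuple of indices, and $p^4$ tuples, which is nowhere near $o(p)$ without a further rigidity argument you do not supply.

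The paper avoids all of this by \emph{decoupling} the pattern geometry from the randomness. It works with five parameters $(a,b,c,d,e)$ and defines $A$ \emph{implicitly} as the set of grid points $(x,y)$ satisfying $(ax+by)^2\equiv cx+dy+e\pmod p$, rather than as a parametrized image. Then it proves a single clean lemma: any four fixed grid points in general position lie in $A$ with probability $O(1/p^4)$ (after an affine normalization, this is a short computation over~$\F_p$---no lifting branches, no metric information). Combining this with Dumitrescu's pre-existing bound of $O(n^5)$ on the number of $\pi_1,\pi_3,\dots,\pi_8$ configurations in the full grid immediately gives $O(n)$ expected survivors; a further random $\varepsilon$-thinning and deletion then finishes. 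The fifth parameter $e$ is not cosmetic: the paper needs $\gg p^4$ parameter states to get the $O(1/p^4)$ bound, which your four-parameter family does not provide.
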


This theorem answers \cite[Problem 1]{dumitrescu} in the affirmative. (As discussed in Remark \ref{problems} below, it also answers a randomized version of \cite[Problem 2]{dumitrescu} in the affirmative.)

Two ``near-misses'' to Theorem \ref{main-thm} were obtained by Dumitrescu and by Thiele.  In \cite[Theorem 1]{dumitrescu}, Dumitrescu observed that a random subset of $ \{0,\dots,n-1\}^2$ with $\asymp n$ elements would avoid $\pi_1,\pi_3,\dots,\pi_8$ (after deleting some exceptional elements) with positive probability; however, this construction typically contained many parallelograms $\pi_2$.  On the other hand, in earlier works of Thiele \cite{thiele}, and independently Dumitrescu \cite{dumitrescu-2008}, a deterministic algebraic construction of Erd\H{o}s and Tur\'an \cite{erdos-turan} (based on the finite field parabola $\{ (x,x^2): x \in \F_p\}$, and initially used to construct Sidon sets) was used to produce a subset of $\{0,\dots,n-1\}^2$ with $\gg n$ elements which avoided $\pi_1, \pi_2, \pi_3$ (and was furthermore in general position, with no four points concyclic), but potentially contained configurations in the patterns $\pi_4, \dots, \pi_8$; see \cite{dumitrescu} for further discussion.  It was also observed in \cite{dumitrescu} that the entire grid $ \{0,\dots,n-1\}^2$ avoids the pattern $\pi_1$.

As it turns out, the two constructions above can in fact be combined to give Theorem \ref{main-thm}, with the main new idea being to randomize the parabola used in the algebraic construction.  Firstly, by the standard deletion method (also employed in \cite{dumitrescu}), it suffices to establish a weaker version of the main theorem in which a small number of patterns are permitted to survive:

\begin{theorem}[Main theorem with survivors]\label{main-thm-survive} Let $n$ be sufficiently large.  Then there exists a subset $A$ of $ \{0,\dots,n-1\}^2$ of cardinality $\gg n$ that has at most $O(n)$ copies of each of the eight patterns $\pi_1,\dots,\pi_8$.
\end{theorem}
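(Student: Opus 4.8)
The plan is to take for $A$ a randomly dilated finite-field parabola. Fix a prime $p \le n$ with $n \asymp p$ (for instance the largest prime not exceeding $n$), choose $\lambda$ uniformly at random in $\F_p \setminus \{0\}$, and set
\[
A_\lambda := \bigl\{\, (a,\ \lambda a^2 \bmod p) \ :\ a \in \{0,\dots,p-1\} \,\bigr\} \subseteq \{0,\dots,n-1\}^2,
\]
with $\lambda a^2 \bmod p$ denoting the representative in $\{0,\dots,p-1\}$; write $P(a) := (a, \lambda a^2 \bmod p)$ for the point indexed by $a$. Since $a \mapsto P(a)$ is injective, $|A_\lambda| = p \gg n$ for every $\lambda$, so the cardinality requirement is automatic and everything is about the patterns. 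I would establish: (i) $A_\lambda$ contains no copy of $\pi_1$, $\pi_2$, or $\pi_3$, for \emph{every} $\lambda$; and (ii) $\E_\lambda$ of the number of copies of $\pi_k$ in $A_\lambda$ is $O(n)$ for each $k \in \{4,\dots,8\}$. Granting these, Markov's inequality and a union bound over the five patterns produce a single $\lambda$ for which $A_\lambda$ has $O(n)$ copies of each of $\pi_4,\dots,\pi_8$ and none of $\pi_1,\pi_2,\pi_3$, which is Theorem~\ref{main-thm-survive}.

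Step (i) is the deterministic, ``Thiele--Dumitrescu'' part. The grid $\{0,\dots,n-1\}^2$ has no equilateral triangle (a lattice triangle has area in $\tfrac12\Z$, whereas an equilateral triangle whose squared side length is an integer does not), so every subset avoids $\pi_1$. For $\pi_2$ and $\pi_3$ one uses only that $A_\lambda$ is the graph of $a \mapsto \lambda a^2$ over $\F_p$: if a parallelogram relation $P(a_1) - P(a_2) = \pm\bigl(P(a_3) - P(a_4)\bigr)$ holds in $\Z^2$, then comparing first coordinates gives $a_1 - a_2 = \pm(a_3 - a_4)$ in $\Z$, and reducing the second-coordinate identity modulo $p$ gives $\lambda(a_1^2 - a_2^2) \equiv \pm\lambda(a_3^2 - a_4^2) \pmod p$; since $\lambda \ne 0$ and $a_1 - a_2 \not\equiv 0 \pmod p$, dividing through forces $a_1 + a_2 \equiv a_3 + a_4$, hence $\{a_1,a_2\} = \{a_3,a_4\}$ and the parallelogram is degenerate. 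Likewise the $2\times2$ collinearity determinant of $P(a_1),P(a_2),P(a_3)$ reduces modulo $p$ to $\lambda(a_2-a_1)(a_3-a_1)(a_3-a_2)$, which is nonzero, so no three points of $A_\lambda$ are collinear and in particular $A_\lambda$ has no four collinear points, ruling out $\pi_3$. Since the dilation $a^2 \mapsto \lambda a^2$ is a linear automorphism of $\F_p^2$, this works for every $\lambda$.

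Step (ii) is the point of randomizing. I would write, for $k \in \{4,\dots,8\}$,
\[
\E_\lambda\bigl[\,\#\{\pi_k\text{-copies in }A_\lambda\}\,\bigr] = \frac{1}{p-1}\,\#\bigl\{(a_1,\dots,a_4,\lambda) : P(a_1),\dots,P(a_4)\text{ is a }\pi_k\text{-copy}\bigr\},
\]
and estimate the count on the right by decoupling its archimedean and modular content. A $\pi_k$-copy in $A_\lambda$ is a quadruple of distinct grid points $(x_i,y_i)$ satisfying the distance-equalities that define $\pi_k$ which \emph{in addition} lies on the single parabola $A_\lambda$; the first point fixes $\lambda \equiv y_1 x_1^{-2}\pmod p$, and each of the other three then imposes a congruence $y_i \equiv \lambda x_i^2 \pmod p$. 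A count of the potential $\pi_k$-configurations in the grid — which, following Dumitrescu's estimates, reduces to sums of divisor- and two-squares-representation functions (for instance the number of isosceles triangles in an $n\times n$ grid) — gives $n^{4+o(1)}$ of them for $k\ne 6$ (and $\asymp n^5$ for the kite pattern $k=6$); since the three congruences are modular in nature and essentially uncorrelated with the distance-equalities, only an $n^{-3+o(1)}$ proportion survive, so the surviving count is $n^{1+o(1)}$ (resp.\ $n^{2+o(1)}$), hence $\E_\lambda = n^{o(1)}$ (resp.\ $n^{1+o(1)}$) — comfortably $O(n)$ in the first case.

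The remaining work, and where I expect the real difficulty to lie, is twofold. First, making the modular/archimedean decoupling genuinely uniform over all $n^{4+o(1)}$ configurations, so as to remove the $n^{o(1)}$ loss and land on a clean $O(n)$: for this one would decompose the occurring distances dyadically and feed in a large-sieve- or mean-value-type estimate bounding how often the two-squares structure of a distance conspires with the congruences. Second, the kite pattern $\pi_6$, for which the crude estimate only gives $n^{1+o(1)}$: the $\asymp n^5$ potential kites in the grid are overwhelmingly those symmetric about a near-axis-aligned lattice line, and here one argues separately that such a kite cannot sit on a dilated parabola except in degenerate configurations totalling only $O(n)$ per $\lambda$ (a horizontal axis would make two reflected vertices share a column, impossible in a one-point-per-column graph; a vertical axis forces the axis through the non-lattice point $(\tfrac{p}{2},\ast)$; a $\pm45^\circ$ axis pins the parabola down to $O(n)$ instances), while the remaining ``generic-direction'' kites again number only $n^{4+o(1)}$ and fall to the argument of the previous paragraph. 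Collecting the five bounds and invoking Markov and the union bound completes the proof.
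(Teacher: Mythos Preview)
Your construction randomizes over only a one-parameter family of parabolae (the dilations $y \equiv \lambda x^2$), and this is precisely where the argument breaks down. With $p-1 \asymp n$ choices of $\lambda$, the probability that four given grid points all lie in $A_\lambda$ is at best $O(1/p)$: either the four points lie on a common dilated parabola (pinning down $\lambda$ uniquely) or they do not. Combined with Dumitrescu's count of $\asymp n^5$ grid configurations in the patterns $\pi_3, \pi_6$, linearity of expectation gives only $\E_\lambda = O(n^4)$, which is useless. To rescue the bound you must show that among those $\asymp n^5$ configurations only $O(n^2)$ actually satisfy the three extra congruences $y_i x_j^2 \equiv y_j x_i^2 \pmod p$. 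Your heuristic that these congruences are ``essentially uncorrelated with the distance-equalities'' is not a proof; it is exactly the kind of hard equidistribution statement (mixing archimedean distance conditions with mod-$p$ constraints) that the paper explicitly sets out to \emph{avoid}, and you yourself flag it as ``where I expect the real difficulty to lie.'' The sketched case analysis for $\pi_6$ is likewise far from a proof.

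The paper's key idea is to randomize over the \emph{full} family of affine images of the parabola, using five parameters $a,b,c,d,e$ with $ad-bc \neq 0$. This buys the uniform bound $\Pr[\{P_1,P_2,P_3,P_4\} \subset A] = O(1/p^4)$ for \emph{every} quadruple of distinct grid points (Lemma~\ref{seven-rare}), which multiplies directly against the $O(n^5)$ count from Lemma~\ref{lemma-refine} to give $\E = O(n)$ with no decorrelation input whatsoever. As the paper stresses in the accompanying footnote, parabolae in the plane have four degrees of freedom, and one needs all of them for pure averaging to control four-point patterns; a single dilation parameter is three short.

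A secondary error: ``no three collinear'' rules out only the degenerate four-on-a-line case of $\pi_3$. A genuine isosceles trapezoid has no three collinear vertices, so your deterministic handling of $\pi_3$ is incomplete; one would need a no-four-concyclic argument in the style of Thiele, and it is not clear this survives the dilation by arbitrary $\lambda$ over the full range $0 \le a < p$.
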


Indeed, if one randomly refines the set $A$ in Theorem \ref{main-thm-survive} by a small constant factor $0 < \eps < 1$, then with positive probability, the refined set will have cardinality $\gg \eps n$ and have at most $O(\eps^4 n)$ surviving configurations in the eight patterns $\pi_1,\dots,\pi_8$. Deleting all points associated with those patterns then gives Theorem \ref{main-thm} after choosing $\eps$ to be a sufficiently small positive constant and adjusting $n$ appropriately (cf. \cite[Chapter 3]{alon}).

Next, we recall a key calculation\footnote{The parameter $n$ in this paper would correspond to $n^2$ in the notation of \cite{dumitrescu}.} in \cite{dumitrescu}, which established that the seven patterns $\pi_1, \pi_3,\dots,\pi_8$ (excluding the parallelogram $\pi_2$) were not too common in the grid $\{0,\dots,n-1\}^2$:

\begin{lemma}\label{lemma-refine}\cite[Lemmas 6, 8, 11]{dumitrescu} There are at most $O(n^5)$ four-point configurations in $ \{0,\dots,n-1\}^2$ that are of one of the patterns $\pi_1, \pi_3,\dots,\pi_8$.
\end{lemma}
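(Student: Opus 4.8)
The plan is to treat each of the seven patterns $\pi_1,\pi_3,\dots,\pi_8$ in turn, using only two elementary facts about the grid $G=\{0,\dots,n-1\}^2$: \textbf{(L)} every line meets $G$ in at most $n$ lattice points (project onto a coordinate axis); and \textbf{(C)} every circle meets $G$ in $O_\eps(n^\eps)$ lattice points for each fixed $\eps>0$ --- the lattice points on a circle $|z-z_0|^2=m$ centred at $z_0\in G$ correspond to representations $m=a^2+b^2$ with $0\le m\le 2n^2$, and $r_2(m)\le 4d(m)\ll_\eps m^\eps$ is the standard elementary divisor bound. Fact \textbf{(C)} is the only (entirely classical) number theory needed.

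Pattern $\pi_1$ contributes nothing: an equilateral triangle with lattice vertices and squared side length $s\in\Z_{>0}$ would have area $\tfrac{\sqrt3}{4}s\notin\Q$, contradicting the fact that a lattice triangle has area in $\tfrac12\Z$; so $G$ contains no equilateral triangle at all (as already noted in \cite{dumitrescu}). Patterns $\pi_4,\pi_5,\pi_7,\pi_8$ all yield to the scheme ``fix two points freely, then confine each remaining point to one circle''. For the star $\pi_4$, fix the centre $P_0$ and one leaf $P_1$; this fixes the radius $r=|P_0P_1|$, and the other two leaves lie on the circle of radius $r$ about $P_0$, so \textbf{(C)} gives $O_\eps(n^{4+2\eps})=O(n^5)$ configurations. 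For the path $\pi_5=P_1P_2P_3P_4$ with common edge length $s$, fix $P_1,P_2$ (so $s=|P_1P_2|$), then $P_3$ lies on the circle of radius $s$ about $P_2$ and $P_4$ on the circle of radius $s$ about $P_3$; again $O(n^5)$. For $\pi_7,\pi_8$ --- an isosceles triangle with apex $A$ and base $\{B,C\}$, plus a point $D$ with $|BD|=|BC|$, respectively $|AD|=|BC|$ --- fix $A$ and $B$, put $C$ on the circle of radius $|AB|$ about $A$ (which fixes the base length $|BC|$), then put $D$ on the appropriate circle of radius $|BC|$; once more $O_\eps(n^{4+2\eps})=O(n^5)$. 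Pattern $\pi_3$ (four collinear points forming a degenerate isosceles trapezoid, i.e.\ a collinear quadruple symmetric about a common centre) is handled by rigidity: fix two of the points and a third on the line through them ($O(n^5)$ choices by \textbf{(L)}), and note that the three ways of pairing the four points into centrally symmetric pairs leave at most three candidates for the fourth point.

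The kite $\pi_6$ is the remaining case, and I expect it to be the crux. Naively it gives only $O(n^6)$: fixing the two vertices $\{B,D\}$ of a kite that do \emph{not} lie on its symmetry axis, the other two vertices range independently over the up-to-$n$ lattice points on the perpendicular bisector $\ell$ of $BD$; and re-slicing so that one vertex lands on a circle still costs $O(n^{5+\eps})$. The fix is an arithmetic sharpening of \textbf{(L)} for perpendicular bisectors: if $\vec{BD}=k\,d$ with $d$ primitive and $|d|^2=D_0$, then any two lattice points of $\ell$ differ by an integer multiple of the primitive vector $d^\perp$ of length $\sqrt{D_0}$, so $\ell$ contains only $O(n/\sqrt{D_0})$ points of $G$ (note $D_0\le 2n^2$). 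A kite being precisely a pair of isosceles triangles on the common base $BD$, the number of kites is $\ll\sum_{\{B,D\}\subset G}(n/\sqrt{D_0})^2=n^2\sum_{\{B,D\}\subset G}D_0^{-1}$; grouping the pairs $\{B,D\}$ according to the primitive direction $d$ of $\vec{BD}$ and using that $\sum_{d\ne0}|d|^{-3}$ converges then bounds this by $O(n^5)$. (Alternatively: fix the two axis vertices; the reflection of a lattice point in the line through them is again a lattice point only if that point lies in a fixed sublattice of index $\asymp D_0$, which again gives $O(n^5)$ after summing over directions.) Combining the seven estimates yields the lemma --- these are the computations carried out in \cite[Lemmas 6, 8, 11]{dumitrescu}.
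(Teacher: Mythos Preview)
The paper does not actually prove this lemma --- it simply cites \cite{dumitrescu} --- so there is no detailed argument in the paper to compare against; your sketch is a reconstruction of the cited computations.  Most of it is correct, and your divisor-bound treatment of $\pi_4,\pi_5,\pi_7,\pi_8$ is in fact cruder but simpler than Dumitrescu's (who pushes those four patterns down to $O(n^{14/3}\log n)$, as the paper notes).  Your kite computation for $\pi_6$ is fine.

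There is, however, a genuine gap in your handling of $\pi_3$.  You have read the paper's parenthetical as the \emph{definition}, but it is only a clarification that collinear quadruples are to be included as the degenerate case.  The pattern $\pi_3$ is the general isosceles trapezoid: four points $P_1P_2P_3P_4$ with $P_1P_2\parallel P_4P_3$ and $|P_1P_4|=|P_2P_3|$, which generically determines exactly four distinct distances (the two parallel sides, the pair of equal legs, and the pair of equal diagonals) and is not covered by any of the other seven patterns.  Your ``rigidity'' argument treats only the collinear case.  The non-degenerate case genuinely contributes $\asymp n^5$ configurations --- for instance, vertical axes $x=c$ already give $\sim n$ choices of axis times $\sim n^4$ choices of two symmetric pairs --- so it cannot be absorbed.

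The fix is in the same spirit as your kite argument, but with the roles of ``axis vertex'' and ``off-axis vertex'' swapped.  Fix a pair $\{P_1,P_2\}\subset G$ with primitive direction $d$ and $D_0=|d|^2$; its perpendicular bisector $\ell$ is the symmetry axis.  A second pair $\{P_4,P_3\}$ shares this axis iff $P_3=\sigma_\ell(P_4)$ lands in $\Z^2$, which forces $2P_4\cdot d\equiv (P_1+P_2)\cdot d\pmod{D_0}$; hence $P_4$ is confined to $O(n^2/D_0+n/\sqrt{D_0}+1)$ points of $G$.  Summing over the $O(n^3/\sqrt{D_0})$ pairs in each primitive direction $d$ and then over $d$, the main term is $O(n^5)\sum_{d}|d|^{-3}=O(n^5)$ exactly as in your $\pi_6$ calculation, and the lower-order terms are $O(n^4\log n)$.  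With this addition your argument is complete.
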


In fact, more was shown in \cite{dumitrescu}: the pattern $\pi_1$ does not occur at all in $ \{0,\dots,n-1\}^2$, while $\pi_4, \pi_5, \pi_7, \pi_8$ each appear at most $O(n^{14/3} \log n)$ times.  However, the bound in Lemma \ref{lemma-refine} (which, as shown in \cite{dumitrescu}, is optimal up to constants for $\pi_3$ and $\pi_6$) will be all that is needed for our purposes. 

As observed in \cite{dumitrescu}, the probabilistic method then implies a weaker version of Theorem \ref{main-thm-survive} in which the parallelogram pattern $\pi_2$ is not controlled, since one can simply take a random refinement of the grid $\{0,\dots,n-1\}^2$ by a factor of $1/n$; such a refinement has cardinality $\asymp n$ with high probability, but cuts down the $O(n^5)$ configurations in Lemma \ref{lemma-refine} by a factor of $1/n^4$ on average.  Unfortunately, this argument breaks down quite badly for $\pi_2$, as the number of configurations in $\{0,\dots,n-1\}^2$ of this pattern is instead $\asymp n^6$ (see \cite[Lemma 3]{dumitrescu}), and this random construction usually contains far too many parallelograms to delete completely.
  
In order to eliminate parallelograms, we instead randomize the algebraic construction from \cite{thiele}, \cite{dumitrescu-2008}, which is specifically designed to avoid $\pi_2$.  Namely, we first use Bertrand's postulate to locate a prime $p$ between $4n$ and $8n$ (in particular, $p$ is odd and $p \asymp n$), and let $a,b,c,d,e$ be elements of the finite field $\F_p$ of order $p$, chosen uniformly at random amongst all quintuples that obey the non-degeneracy condition
\begin{equation}\label{adbc}
  ad-bc \neq 0,
 \end{equation}
 thus $(a,b)$, $(c,d)$ are linearly independent vectors in $\F_p^2$.   Note that if we just chose $a,b,c,d,e$ uniformly and independently at random, then the condition \eqref{adbc} would be obeyed with probability $1-O(1/p)$.  So the distribution of $(a,b,c,d,e)$ is the uniform distribution\footnote{Geometrically, the state space $\{(a,b,c,d,e) \in \F_p^5: ad-bc = 0\}$ can be viewed as coordinatizing the group $\mathrm{Aff}(\F_p^2) \equiv \mathrm{GL}_2(\F_p) \ltimes \F_p^2$ of invertible affine transformations $(x,y) \mapsto (ax+by+f, cx+dy+e)$ of the plane $\F_p^2$, quotiented out (on the left) by the Galilean symmetries $(x,y) \mapsto (x+t, y+2tx+t^2)$ of the parabola $y=x^2$ to achieve the normalization $f=0$.  If desired, one could also quotient out by the additional dilation symmetries $(x,y) \mapsto (\lambda x, \lambda^2 x)$ of the parabola for $\lambda \neq 0$ (which in our coordinates becomes $(a,b,c,d,e) \mapsto (\lambda a, \lambda b, \lambda^2 c, \lambda^2 d, \lambda^2 e)$) to projectively remove an additional degree of freedom; but it will be slightly more convenient here to work with affine coordinates instead of projective ones.} on $\F_p^5$ conditioned to an event of probability $1-O(1/p)$.

The random set $A$ we will propose for Theorem \ref{main-thm-survive} will then be defined as the truncated random parabola
\begin{equation}\label{parab}  
  A \coloneqq \{ (x,y) \in \{0,\dots,n-1\}^2: (ax+by)^2 = cx+dy+e \mod p \}. 
\end{equation}
The standard truncated parabola $S \coloneqq \{ (x,y) \in \{0,\dots,n-1\}^2: y = x^2 \mod p\}$ is essentially the example considered in \cite{thiele}, \cite{dumitrescu-2008}, and is the special case of \eqref{parab} when $a=d=1$ and $b=c=e=0$.  Geometrically, $A$ is formed by applying a random invertible affine transformation\footnote{Such a transformation would generate a slightly modified equation $(ax+by+f)^2 = cx+dy+e$ for some randomly chosen $f \in \F_p$, but it is easy to redistribute the effect of that lower order term $f$ to the $c,d,e$ coefficients, thanks to the aforementioned Galilean symmetries of the parabola.} to the parabola $\{ (x,x^2): x \in \F_p \}$ and then restricting to the grid $\{0,\dots,n-1\}^2$.  While the construction \eqref{parab} is not nearly as random as a completely random subset of $\F_p^2$ of density $\asymp 1/p$, we shall see that there will (barely) still be enough ``entropy'' in the five random parameters $a,b,c,d,e$ for the probabilistic method to be effective\footnote{In particular, we will need probability bounds of $O(1/p^4)$ in Lemma \ref{seven-rare} below, so it will be important that the number $p^5 - O(p^4)$ of possible states $(a,b,c,d,e)$ in this random construction is $\gg p^4$, even after quotienting out by dilation symmetry (which cuts down the number of states by a factor of $p$ or so). To put it another way, the parabolae in a plane have four independent degrees of freedom, thus allowing randomization of such parabolae to be an effective tool for controlling four-point patterns.}.  In particular, we avoid the difficult number-theoretic questions of trying to count the number of occurrences of the
patterns $\pi_1,\pi_3,\dots,\pi_8$ in the standard truncated parabola $S$ (cf. \cite[Problem 2]{dumitrescu}).

A routine application of the second moment method reveals that $A$ usually has the ``right'' cardinality (up to accceptable errors):

\begin{lemma}\label{card}  With probability at least $0.9$ (say), the set $A$ has cardinality $n^2/p + O(\sqrt{n})$.  In particular, for $n$ large enough, the cardinality of $A$ is $\asymp n$ with probability at least $0.9$.
\end{lemma}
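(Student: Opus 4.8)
The plan is to apply the second moment method, performing the variance computation under the simpler \emph{unconditioned} model $\P_0$ in which $a,b,c,d,e$ are independent and uniform on $\F_p$, and then transferring the resulting concentration estimate to $\P$ (which is $\P_0$ conditioned on $\{ad-bc\neq 0\}$, an event of $\P_0$-probability $1-O(1/p)$). Writing
\[
  X \coloneqq |A| = \sum_{(x,y)\in\{0,\dots,n-1\}^2} \mathbf{1}\big[(ax+by)^2 = cx+dy+e \bmod p\big],
\]
the first moment is immediate: for fixed $(x,y)$ the value $(ax+by)^2$ depends only on $(a,b)$, and given $(a,b)$ the equation $cx+dy+e=(ax+by)^2$ is a nontrivial affine constraint on $(c,d,e)\in\F_p^3$ (the coefficient of $e$ is $1$), holding with $\P_0$-probability $1/p$; hence $\E_0 X = n^2/p$.

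Next I would compute the second moment by expanding $\E_0 X^2 = \sum_{(x,y),(x',y')}\P_0(E_{x,y}\cap E_{x',y'})$, where $E_{x,y}$ denotes the event $(ax+by)^2=cx+dy+e$. The $n^2$ diagonal terms contribute $\E_0 X = n^2/p$. For an off-diagonal pair, $(x,y)\neq(x',y')$ with all coordinates in $\{0,\dots,n-1\}$, and since $n-1 < 4n < p$ the displacement $(x-x',y-y')$ is a \emph{nonzero} vector of $\F_p^2$; conditioning on $(a,b)$ and subtracting the two equations gives the single nontrivial affine constraint $c(x-x')+d(y-y') = (ax+by)^2-(ax'+by')^2$ on $(c,d)\in\F_p^2$ (exactly $p$ solutions), after which $e$ is determined uniquely. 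Hence $\P_0(E_{x,y}\cap E_{x',y'}\mid a,b) = 1/p^2$ for every $(a,b)$, so $\E_0 X^2 = n^2/p + (n^4-n^2)/p^2$ and $\Var_0 X = n^2/p - n^2/p^2 \le n^2/p \ll n$ (using $p\asymp n$).

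To finish, Chebyshev's inequality gives $\P_0(|X-n^2/p|\ge C\sqrt n) \le \Var_0 X/(C^2 n) \le n/(C^2 p) < 1/(4C^2)$ (using $p>4n$). Since $\P_0(ad-bc\neq 0) \ge 1/2$ once $n$, hence $p$, is large, every event $F$ satisfies $\P(F) \le 2\P_0(F)$; applying this to $F = \{|X-n^2/p|\ge C\sqrt n\}$ and choosing the absolute constant $C$ large enough yields $\P(|X-n^2/p|\ge C\sqrt n)\le 0.1$, i.e.\ $|A| = n^2/p + O(\sqrt n)$ with probability at least $0.9$; and since $4n<p<8n$ forces $n^2/p\asymp n$ while the error $O(\sqrt n)$ is $o(n)$, this event also has $|A|\asymp n$ for $n$ large. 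Nothing here is genuinely hard; the one step requiring care is the off-diagonal second moment, where it is crucial that $n<p$ so that distinct grid points remain distinct modulo $p$ and the displacement vector $(x-x',y-y')$ is nonzero in $\F_p^2$ — this is what forces the joint probability down to $1/p^2$ rather than $1/p$, hence the variance down to $O(n)$ rather than $O(n^2)$, which is exactly the bound needed for the claimed $O(\sqrt n)$ fluctuation. The only other point to confirm is that conditioning on $ad-bc\neq 0$ changes all probabilities by at most a bounded factor, which is immediate.
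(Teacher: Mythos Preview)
Your argument is correct and is essentially the same as the paper's: both compute mean and variance under the unconditioned model $\P_0$, use that distinct grid points remain distinct in $\F_p^2$ (so the events $E_{x,y}$ are pairwise independent with joint probability $1/p^2$), apply Chebyshev, and then transfer to $\P$ using that $\P_0(ad-bc\neq 0)=1-O(1/p)$. The paper phrases the pairwise-independence step as ``$(x,y,1)$ and $(x',y',1)$ are linearly independent in $\F_p$'' rather than via your subtract-and-solve computation, but this is the same observation.
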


\begin{proof}  Let us temporarily remove the non-degeneracy condition \eqref{adbc}, so that $a,b,c,d,e$ now become independent random variables.  It is clear that any point $(x,y) \in \{0,\dots,n-1\}^2$ will now lie in $A$ with probability $1/p$, just from the randomness of $e$ alone.  In fact, any two distinct points $(x,y), (x',y') \in \{0,\dots,n-1\}^2$ will both lie in $A$ with a joint probability of $1/p^2$, from the randomness of $c,d,e$ (since $(x,y,1)$ and $(x',y',1)$ are linearly independent in $\F_p$); thus the events $(x,y) \in A$ are pairwise independent.  This implies that the cardinality of $A$ has mean $n^2/p \asymp n$ and variance $O(n^2/p) = O(n)$, and hence from Chebyshev's inequality, $A$ will have cardinality $n^2/p + O(\sqrt{n})$ with probability at least $0.95$ (say).  Conditioning to the event \eqref{adbc}, we obtain the claim. 
\end{proof}

Crucially, the construction also avoids $\pi_2$ (cf. \cite[Lemma 1]{dumitrescu-2008}):

\begin{lemma}\label{avoid} With probability $1$, the set $A$ avoids the parallelogram pattern $\pi_2$.
\end{lemma}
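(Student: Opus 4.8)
The plan is to exploit the fact that $A$ lies inside an affine image over $\F_p$ of the parabola $\{(t,t^2): t \in \F_p\}$, and that parabolas and their affine images contain no parallelograms. The only point needing care is that this algebraic fact survives the passage between the Euclidean plane $\R^2$, where the parallelogram pattern lives, and the finite field $\F_p$, where the defining equation of $A$ lives: this works because the defining relation of a parallelogram is \emph{affine-linear}, hence may be read modulo $p$, and because $p$ was chosen larger than $n$, so that the vanishing modulo $p$ of a coordinate difference of two points of the grid $\{0,\dots,n-1\}^2$ forces that difference to vanish outright.

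Concretely, suppose for contradiction that four distinct points of $A$ form a parallelogram; labelling its vertices cyclically as $P_1,P_2,P_3,P_4$, the two diagonals $P_1P_3$ and $P_2P_4$ share a midpoint, so that $P_1+P_3 = P_2+P_4$ in $\R^2$. Writing $P_i = (x_i,y_i)$, this says $x_1+x_3 = x_2+x_4$ and $y_1+y_3 = y_2+y_4$ as identities of integers, hence also in $\F_p$. Introduce $u_i \coloneqq ax_i+by_i \bmod p$ and $v_i \coloneqq cx_i+dy_i+e \bmod p$, so that membership $P_i \in A$ reads exactly $u_i^2 = v_i$ in $\F_p$. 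By linearity of $(x,y)\mapsto ax+by$ and of $(x,y)\mapsto cx+dy+e$ (the constant term $e$ appearing twice on each side), the above two identities in $\F_p$ give $u_1+u_3 = u_2+u_4$ and $v_1+v_3 = v_2+v_4$; the latter reads $u_1^2+u_3^2 = u_2^2+u_4^2$.

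Now comes the one genuinely arithmetic step. Put $s \coloneqq u_1+u_3 = u_2+u_4$; from $u_1^2+u_3^2 = s^2-2u_1u_3$ and the analogous identity for $u_2,u_4$, and since $p$ is odd (so $2$ is invertible in $\F_p$), we obtain $u_1u_3 = u_2u_4$. Hence $(T-u_1)(T-u_3) = T^2 - sT + u_1u_3 = (T-u_2)(T-u_4)$ as polynomials over the field $\F_p$, so $\{u_1,u_3\} = \{u_2,u_4\}$ as multisets. Relabelling $P_2 \leftrightarrow P_4$ if necessary (which leaves the relation $P_1+P_3 = P_2+P_4$ intact, being symmetric in $P_2,P_4$), we may assume $u_1 = u_2$, whence also $v_1 = u_1^2 = u_2^2 = v_2$. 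Spelling these out, $a(x_1-x_2)+b(y_1-y_2) \equiv 0$ and $c(x_1-x_2)+d(y_1-y_2) \equiv 0 \pmod p$; since the matrix $\begin{pmatrix} a & b \\ c & d \end{pmatrix}$ is invertible over $\F_p$ by the non-degeneracy hypothesis \eqref{adbc}, this forces $x_1 \equiv x_2$ and $y_1 \equiv y_2 \pmod p$, and then $x_1 = x_2$ and $y_1 = y_2$ because $|x_1-x_2|, |y_1-y_2| < n < p$. Thus $P_1 = P_2$, contradicting the distinctness of the vertices of the parallelogram. Since the quintuple $(a,b,c,d,e)$ always obeys \eqref{adbc} by construction, $A$ avoids $\pi_2$ deterministically, in particular with probability $1$.

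I do not expect a genuine obstacle here: the argument is entirely elementary and, unlike the treatment of the remaining patterns $\pi_1,\pi_3,\dots,\pi_8$, uses no randomness beyond the non-degeneracy constraint \eqref{adbc} and no quantitative estimates. The only points needing (routine) care are that the parallelogram identity and the linearity of the change of variables may both be reduced mod $p$, and that the final descent from ``$\equiv 0 \pmod p$'' to ``$=0$'' is legitimate --- which is exactly why the construction takes $p$ comparable to, but strictly larger than, $n$.
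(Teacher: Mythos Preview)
Your proof is correct and follows essentially the same approach as the paper: both arguments reduce to the fact that on the standard parabola $t\mapsto(t,t^2)$ over $\F_p$, the relations $u_1+u_3=u_2+u_4$ and $u_1^2+u_3^2=u_2^2+u_4^2$ force $\{u_1,u_3\}=\{u_2,u_4\}$ (the paper phrases this via the identity $(x+h+k)^2-(x+h)^2-(x+k)^2+x^2=2hk$ after an explicit affine change of variables, whereas you extract it via elementary symmetric functions). The only cosmetic difference is that the paper performs the affine change of variables $(x,y)\mapsto(cx+dy+e,\,ax+by)$ once and then works on the standard parabola, while you keep the coordinates $u_i,v_i$ throughout and invoke the invertibility of $\begin{psmallmatrix}a&b\\c&d\end{psmallmatrix}$ at the end; both routes use exactly the same ingredients (oddness of $p$, the non-degeneracy condition~\eqref{adbc}, and the inequality $n<p$).
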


\begin{proof}
Since $p > 4n$, any parallelogram in $\{0,\dots,n-1\}^2$ lifts up to a parallelogram in $\F_p^2$ (defined as a quadruple of distinct points in $\F_p^2$ of the form $\{P, P+H, P+K, P+H+K\}$).  Hence it suffices to show that the finite field parabola
\begin{equation}\label{ffp}
 \{ (x,y) \in \F_p^2 : (ax+by)^2 = cx+dy+e \}
\end{equation}
does not contain parallelograms.  By applying the invertible change of variables $(x,y) \mapsto (cx+dy+e, ax+by)$, which is affine and thus preserves\footnote{In the language of additive combinatorics, this change of variables, as well as the lifting map from $\{0,\dots,n-1\}^2$ to $\F_p^2$, are Freiman isomorphisms.} the property of being parallelogram-free, it suffices to show that the standard parabola
$$ \{ (x,x^2) : x \in \F_p^2 \}$$
does not contain parallelograms.  But this is an easy consequence of the identity
$$ (x+h+k)^2 - (x+h)^2 - (x+k)^2 + x^2 = 2hk$$
for any $x,h,k \in \F_p$, noting that the right-hand side is non-zero whenever $h,k$ are non-zero since $p$ is odd.
\end{proof}

Finally, the construction mostly avoids any given configuration of four points (in a roughly similar fashion to a completely random subset of $\{0,\dots,n-1\}^2$ of density $\asymp 1/p$):

\begin{lemma}\label{seven-rare}  Let $P_1,P_2,P_3,P_4$ be distinct elements of $\{0,\dots,n-1\}^2$.  Then the probability that $\{P_1,P_2,P_3,P_4\}$ lies in $A$ is $O(1/p^4)$.
\end{lemma}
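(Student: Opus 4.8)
The plan is to pass from the grid to the whole plane $\F_p^2$ and exploit that a non-degenerate parabola of the relevant shape is pinned down, up to one scaling parameter, by four constraints, so that passing through four prescribed points cuts the roughly-$p^5$-element parameter space $\{ad-bc\neq0\}\times\F_p$ down to $O(p)$ points. As a first step I would reduce to a statement over $\F_p$: since the coordinates of the four points lie in $\{0,\dots,n-1\}$ and $p>n$, the points have distinct images in $\F_p^2$, still written $P_i=(x_i,y_i)$, and the event $\{P_1,\dots,P_4\}\subseteq A$ is exactly the event that the conic $\{(ax+by)^2=cx+dy+e\}\subseteq\F_p^2$ passes through all four. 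I would also note that $ad-bc\neq0$ forces this conic to be \emph{irreducible}: if it were a product of two affine-linear forms, then (its quadratic part being the perfect square $(ax+by)^2$) the linear form $cx+dy+e$ would equal a constant plus a scalar multiple of $ax+by$, contradicting $ad-bc\neq0$. An irreducible plane conic meets every line in at most two points, so if $P_1,\dots,P_4$ are collinear the probability is $0$; hence I may assume the four points are not all collinear and, after relabelling, that $P_1,P_2,P_3$ are non-collinear.

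Now, with $P_1,P_2,P_3$ non-collinear the $3\times3$ matrix with rows $(x_i,y_i,1)$, $i=1,2,3$, is invertible, so for every $(a,b)\in\F_p^2$ the three equations $cx_i+dy_i+e=(ax_i+by_i)^2$, $i=1,2,3$, determine $(c,d,e)$ uniquely, depending quadratically on $(a,b)$. The remaining equation, at $P_4$, then reads $Q(a,b)=0$, where, expanding the squares (here $p$ odd is used),
\[ Q(a,b)=\sum_{i=1}^4\lambda_i(ax_i+by_i)^2 \]
is a binary quadratic form, $(\lambda_i)_{i=1}^4\neq0$ being the cofactor vector realizing the linear dependence of the four rows $(x_i,y_i,1)$. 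Hence the quintuples $(a,b,c,d,e)$ that place all four points in $A$ correspond to a subset of the zero set of $Q$ in $\F_p^2$ (each such $(a,b)$ carrying its uniquely determined $(c,d,e)$); if $Q\not\equiv0$ this zero set has at most $2p$ elements, and since the sample space has size $p^5-O(p^4)\gg p^5$, the claimed bound $O(1/p^4)$ follows.

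The remaining point — and the main obstacle — is to show $Q\not\equiv0$. Since $(\lambda_i)$ already annihilates the columns $(1)_i,(x_i)_i,(y_i)_i$, the identity $Q\equiv0$ would force it to annihilate $(x_i^2)_i,(x_iy_i)_i,(y_i^2)_i$ as well; equivalently, $P_1,\dots,P_4$ would impose only three independent linear conditions on the six-dimensional space of conics. I would rule this out by a classical argument: the space of conics through the non-collinear points $P_1,P_2,P_3$ is three-dimensional (three non-collinear points impose three independent conditions) and is spanned by the three pair-of-lines conics $\ell_{12}\ell_{13}$, $\ell_{12}\ell_{23}$, $\ell_{13}\ell_{23}$, where $\ell_{ij}$ is an affine-linear equation of the line $P_iP_j$ — linear independence of these three is a short unique-factorization argument using non-collinearity of $P_1,P_2,P_3$. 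If $P_4$ imposed no new condition it would have to lie on all three; but $\ell_{12}\cap\ell_{13}=\{P_1\}$, $\ell_{12}\cap\ell_{23}=\{P_2\}$, $\ell_{13}\cap\ell_{23}=\{P_3\}$, and a short case check (is $P_4\in\ell_{12}$ or not?) then forces $P_4\in\{P_1,P_2,P_3\}$, contradicting distinctness of the points. So $Q\not\equiv0$, completing the argument.

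Geometrically, the statement $Q\not\equiv0$ (i.e.\ rank four of the $4\times 6$ Veronese matrix) says that the conics through four non-collinear points form a pencil, which contains at most two genuine parabolas — since the line of their quadratic parts meets the discriminant conic in at most two points — and each such parabola accounts for exactly $p-1$ quintuples $(a,b,c,d,e)$, namely its scaling orbit, for a total of $O(p)$; the computation above just makes this count explicit without invoking the pencil. I expect the bookkeeping in the first two paragraphs to be routine, with essentially all the content concentrated in the non-vanishing of $Q$.
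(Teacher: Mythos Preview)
Your proof is correct and follows the same overall skeleton as the paper: lift to $\F_p^2$, use irreducibility of the conic to discard degenerate collinear configurations, solve three of the point-on-parabola equations for $(c,d,e)$ in terms of $(a,b)$, reduce the fourth to a binary quadratic $Q(a,b)=0$, show $Q\not\equiv 0$, and count. The executions diverge only at the non-vanishing step. The paper first disposes separately of the case where \emph{three} of the four points are collinear (again via irreducibility), and then exploits the affine invariance of the random parabola to normalize the points to $(0,0),(1,0),(0,1),(s,t)$ with $s,t\neq 0$; the form $Q$ becomes $(s^2-s)a^2+2st\,ab+(t^2-t)b^2$, whose middle coefficient $2st$ is visibly nonzero, and one is done in a line. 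Your route is coordinate-free: you identify $Q\equiv 0$ with the statement that the four points impose only three conditions on the six-dimensional space of affine conics, and rule this out via the pair-of-lines basis $\ell_{12}\ell_{13},\ell_{12}\ell_{23},\ell_{13}\ell_{23}$ for the net of conics through the triangle $P_1P_2P_3$. Your argument is slightly longer but has the mild advantage that the three-collinear sub-case needs no separate treatment (it is absorbed into the case analysis for $P_4$), and it makes the pencil-of-conics picture you sketch at the end fully transparent; the paper's affine normalization buys brevity.
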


\begin{proof}  By lifting up to $\F_p^2$, it suffices to show that if $\tilde P_1,\tilde P_2,\tilde P_3,\tilde P_4$ are four distinct points in $\F_p^2$, then the probability that they all lie in the random parabola \eqref{ffp} is $O(1/p^4)$.   If three of these points are collinear, then the probability is in fact zero, since by Bezout's theorem, a line can only\footnote{It is important here that the parabola is non-degenerate, which is a consequence of the condition \eqref{adbc}.  If the random set $A$ was permitted to also be a straight line (or a pair of straight lines) by weakening or removing this condition, then this lemma would break down if $\tilde P_1, \tilde P_2, \tilde P_3, \tilde P_4$ were collinear.  This explains why it was convenient to impose \eqref{adbc} at the beginning of the construction.} intersect a parabola in at most two points.  Thus we may assume that $\tilde P_1,\tilde P_2,\tilde P_3,\tilde P_4$ are in general position (no three collinear).
  
  Note that the distribution of the random parabola \eqref{ffp} is invariant with respect to invertible affine transformations of $\F_p^2$.  Thus, we may normalize $\tilde P_1$ to be the origin $(0,0)$, then $\tilde P_2$ to be $(1,0)$, then (by general position) $\tilde P_3$ to be $(0,1)$.  By general position again, the fourth point $\tilde P_4$ can then be written as $(s,t)$ where $s,t$ are non-zero\footnote{The general position hypothesis also implies that $s+t \neq 1$, but we will not need this additional constraint here.} elements of $\F_p$.  Substituting into \eqref{ffp}, our task is to show that the system of equations
\begin{align*}
  0 &= e \\
  a^2 &= c + e \\
  b^2 &= d + e \\
  (as+bt)^2 &= cs + dt + e
\end{align*}
are simultaneously satisfied by $(a,b,c,d,e)$ with probability $O(1/p^4)$. We can eliminate some variables and express the above system in the reduced form
\begin{align*}
  (s^2-s) a^2 + 2st ab + (t^2-t) b^2 &= 0 \\ 
  c &= a^2 \\
  d &= b^2 \\
  e &= 0.
\end{align*}
Since $s,t$ are non-zero and $p$ is odd, the homogeneous quadratic polynomial $(a,b) \mapsto (s^2-s) a^2 + 2st ab + (t^2-t) b^2 $ does not vanish identically, and thus has $O(p)$ roots in $\F_p^2$, as can be seen either by direct calculation (the quadratic formula) or an appeal to the Schwarz--Zippel lemma \cite{dl}, \cite{schwartz}, \cite{zippel}.  The other three equations determine $c,d,e$ in terms of $a,b$, hence there are $O(p)$ solutions $(a,b,c,d,e) \in \F_p^5$ to the above system.  Since $(a,b,c,d,e)$ has the uniform distribution in $\F_p^5$, conditioned to an event \eqref{adbc} of probability $1-O(1/p)$, we obtain the desired bound of $O(1/p^4)$.
\end{proof}

From Lemma \ref{seven-rare}, Lemma \ref{lemma-refine}, and linearity of expectation, the expected number of configurations in one of the seven patterns $\pi_1,\pi_3,\dots,\pi_8$ in $A$ is $O(p^5/p^4) = O(n)$, hence by Markov's inequality this number will be $O(n)$ with probability at least $0.9$ (say).  Theorem \ref{main-thm-survive} (and hence Theorem \ref{main-thm}) then follows with probability at least $0.8$ from this observation, Lemma \ref{avoid}, and Lemma \ref{card}.

\begin{remark} As in \cite{thiele}, \cite{dumitrescu-2008}, the sets constructed here for Theorem \ref{main-thm} (or Theorem \ref{main-thm-survive}) are in general position (no three points collinear), and also have no four points concyclic, basically because any non-degenerate parabola in $\F_p^2$ also has these properties (over $\F_p$).
\end{remark}

\begin{remark}\label{problems}  Two problems from \cite{dumitrescu} remain open.  The first (see \cite[Problem 2]{dumitrescu}) is to show that the standard truncated parabola $S$ contains a subset of cardinality $\gg n$ that avoids all eight patterns $\pi_1,\dots,\pi_8$; we show this for a randomized variant $A$ of this parabola, but the original problem will likely require more advanced techniques from analytic number theory to resolve. The second (see \cite[Problem 3]{dumitrescu} and \cite[p. 35]{erdos-88}) is to show that the quantity $\phi(n,4,5)$, which one can define as the minimal number of distinct distances determined by an $n$-point set in which any $4$ points determine at least $5$ distances, grows super-linearly in $n$.  The general lower bound of Guth and Katz \cite{guth-katz} on the Erd\H{o}s distinct distances problem \cite{erdos-46} gives $\phi(n,4,5) \gg n / \log n$, but actually the trivial bound of $\phi(n,4,5) \geq \frac{n-1}{2}$, which comes from observing that each distance to a fixed (or ``pinned'') reference point $P_0$ in the set can occur at most twice if the star pattern $\pi_4$ is forbidden, is asymptotically superior in this case.  The construction here establishes the upper bound $\phi(n,4,5) \ll n^2 / \sqrt{\log n}$, but makes no new progress on the lower bound.  See also \cite[Table 1]{dumitrescu} for some other upper and lower bounds on $\phi(n,k,l)$ for various $k,l$.
\end{remark}

\subsection{Acknowledgements}  

The author is supported by NSF grant DMS-2347850.

\end{document}